\title{Detecting minors in matroids through triangles}
\dedicatory{Dedicated to the memory of Michel Las Vergnas}
\author[Boris Albar]{Boris Albar}
\address{Institut de Math\'{e}matiques et de Mod\'{e}lisation de Montpellier, Universit\'{e} Montpellier 2, Case Courrier 051, Place Eug\`{e}ne Bataillon, 34095 Montpellier Cedex 05, France and
LIRMM, Universit\'{e} Montpellier 2, 161 rue Ada, 34095 Cedex 05, France}
\email{Boris.Albar@lirmm.fr}
\author[Daniel Gon\c calves]{Daniel Gon\c calves}
\address{LIRMM, Universit\'{e} Montpellier 2, 161 rue Ada, 34095 Cedex 05, France}
\email{Daniel.Goncalves@lirmm.fr}
\author[Jorge L. Ram\'irez Alfons\'in]{Jorge L. Ram\'irez Alfons\'in}\thanks{This work was supported by the ANR grants TEOMATRO ANR-10-BLAN 0207 and EGOS 12 JS02 002 01.}
\address{Institut de Math\'{e}matiques et de Mod\'{e}lisation de Montpellier, Universit\'{e} Montpellier 2, Case Courrier 051, Place Eug\`{e}ne Bataillon, 34095 Montpellier Cedex 05, France}
\email{jramirez@math.univ-montp2.fr}
\keywords{Matroids, Minors}
\subjclass[2010]{05B35}
\date{\today}
\theoremstyle{plain}
\newtheorem{theorem}{Theorem}
\newtheorem{lemma}{Lemma}
\theoremstyle{definition}
\newtheorem{question}{Question}
\def\boxit{$\sqcap\kern-8pt\sqcup$}
\begin{document}

\maketitle

\begin{abstract} In this note we investigate some matroid minor structure results.
In particular, we present sufficient conditions, in terms of {\em triangles}, for a matroid to
have either $U_{2,4}$ or $F_7$ or $M(K_5)$ as a minor.
\end{abstract}

\section{Introduction}

In \cite{Mader} Mader proved that, for each $3\le r\le 7$, if a graph $G$ on $n$
vertices has no $K_r$ minor then it has at most $n(r-2)-{{r-1}\choose 2}$ edges.
The latter was used by Nevo \cite{Nevo} to show that, for $3\le r\le 5$, if each
edge of $G$ belongs to at least $r-2$ triangles then $G$ has a $K_r$ minor. The latter also
holds when $r = 6,7$ (see \cite{AG}).
In the same flavour, we investigate similar conditions for a matroid in order to
have certain minors.
For general background in matroid theory we refer the reader to \cite{Oxley,Welsh}.
A {\em triangle} in a matroid is just a circuit of cardinality three. Our
main result is the following.

\begin{theorem}\label{th:main} Let $M$ be a simple matroid. If every element of $M$ belongs to at
least three triangles then $M$ has $U_{2,4}$, $F_7$ or $M(K_5)$ as a minor.
\end{theorem}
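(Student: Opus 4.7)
I argue by contradiction: assume $M$ is a simple matroid in which every element lies in at least three triangles and $M$ has none of $U_{2,4}$, $F_7$, $M(K_5)$ as a minor. Since $U_{2,4}$ is the unique excluded minor for the class of binary matroids (Tutte), $M$ must be binary; in particular the symmetric difference of two circuits of $M$ is a disjoint union of circuits.

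Fix any element $e$ and three distinct triangles $T_i=\{e,a_i,b_i\}$ ($i=1,2,3$) through $e$. If two of these triangles shared an element other than $e$, their symmetric difference would be a $2$-element circuit, contradicting simplicity; hence the seven elements $e,a_1,b_1,a_2,b_2,a_3,b_3$ are distinct, and the three symmetric differences $T_i\triangle T_j$ are $4$-element circuits of $M$. Let $N:=M|(T_1\cup T_2\cup T_3)$; its rank is $3$ or $4$. If $r(N)=3$, then $N$ is the unique simple rank-$3$ binary matroid on seven elements, namely $F_7$, giving the forbidden minor immediately. So we may assume $r(N)=4$ at every choice of $e$.

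If $M$ is graphic, write $M=M(G)$; the triangles of $M$ are precisely the $3$-cycles of $G$, so every edge of $G$ lies in at least three triangles, and the graph-theoretic theorem recalled in the introduction produces a $K_5$-minor of $G$, hence an $M(K_5)$-minor of $M$, a contradiction. Otherwise, Tutte's excluded-minor characterisation of graphic matroids among binary ones gives $M$ one of $F_7$, $F_7^*$, $M^*(K_5)$, $M^*(K_{3,3})$ as a minor; since $F_7$ is excluded, there is a minor $N_0 \in \{F_7^*, M^*(K_5), M^*(K_{3,3})\}$ of $M$.

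The remaining, and main technical, task is to show that the presence of such an $N_0$, combined with the triangle-density hypothesis on $M$, forces $F_7$ or $M(K_5)$ as a further minor. The difficulty is that $F_7^*$ and $M^*(K_5)$ are triangle-free, and $M^*(K_{3,3})$ has only the two vertex-star triangles through each element, so the three triangles required by the hypothesis through any element of $N_0$ must involve elements of $M$ outside $N_0$. The key is to analyse how such external triangles attach to $N_0$ and, via well-chosen contractions and deletions of the attached elements, extract one of the two forbidden minors. The $F_7^*$ case, being triangle-free and self-dual, is the sharpest obstacle, and the rank-$4$ local structure at each element identified earlier is the basic tool for handling it.
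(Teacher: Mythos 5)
Your first two paragraphs are sound: the reduction to binary matroids via Tutte, the observation that three triangles through $e$ pairwise meet only in $e$ (by simplicity and binarity), and the dichotomy on the rank of the restriction to the seven elements (rank $3$ forces $F_7$ since $PG(2,2)$ has exactly seven points) are all correct, and the graphic case is correctly dispatched by Nevo's theorem. But the proof stops exactly where the real work begins. Your final paragraph is a description of a task, not an argument: you identify that $M$ has a minor $N_0\in\{F_7^*,M^*(K_5),M^*(K_{3,3})\}$ and then say one must ``analyse how external triangles attach to $N_0$'' without doing so. This is a genuine gap, and not a small one. The difficulty is that $N_0$ is a minor, not a restriction: it is obtained from $M$ by contractions and deletions, so a triangle of $M$ through an element surviving into $N_0$ may be destroyed, collapsed, or spread arbitrarily by those operations. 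There is no local ``attachment'' structure to analyse, and the rank-$4$ observation from your second paragraph gives no leverage on it. I see no way to complete the proof along these lines without effectively reproving a decomposition theorem.

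The paper takes a different and essentially unavoidable route: it works with a minor-minimal counterexample (under a slightly weakened hypothesis allowing one element, or one triangle, to fail the three-triangle condition) and invokes Seymour's decomposition theory. A lemma shows that when $M$ is a $1$-, $2$- or $3$-sum, the triangle condition passes to a proper minor (one of the summands, simplified), killing the non-$3$-connected cases and the $3$-sum case by minimality. Seymour's theorem on binary matroids with no $F_7$-minor eliminates $F_7^*$, so $M$ is regular and $3$-connected, hence graphic, cographic, $R_{10}$, or a $3$-sum. The cases $R_{10}$ (triangle-free) and graphic (Dirac/Nevo) are easy, but the cographic case requires a separate nontrivial argument about $3$-edge-connected graphs and non-trivial $3$-edge cuts — a case your outline never addresses at all, even though $M^*(K_5)$ and $M^*(K_{3,3})$ are precisely cographic obstructions. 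To repair your proof you would need to (i) introduce the weakened hypothesis and minimality so that sum decompositions can be exploited, (ii) prove the triangle-preservation lemma for $k$-sums, and (iii) supply the cographic argument; at that point you would have reconstructed the paper's proof.
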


We notice that excluding $U_{2,4}$ as a submatroid (instead of as a minor) would not be sufficient
as shown by the matroid $AG(2,3)$.
Note that $AG(2,3)$ doesn't contain $M(K_4)$ as a minor. Hence, it doesn't contain $M(K_5)$ nor
$F_7$ as a minor. Moreover each element of the matroid $AG(2,3)$ belongs to $4$ triangles but it
has no $U_{2,4}$ submatroid. In the same way, graphic matroids (that are $U_{2,4}$ and $F_7$-minor free)
imply that $M(K_5)$ cannot be simply excluded as a submatroid. We do not know if excluding $F_7$ as a submatroid in Theorem
\ref{th:main} would be sufficient.

%

A natural question is whether similar triangle conditions can be used to determine
if a matroid admits $M(K_4)$ as a minor. More precisely,
\smallskip

\begin{quote} {\em is it true that if every element of a matroid $M$ of rank $r\ge 3$
belongs to at least two triangles then $M$ contains $M(K_4)$ as a minor ?}
\end{quote}
\smallskip

The answer to this question is yes if $M$ is regular (we discuss this at the end of this section see \eqref{gen-nevo}).
Moreover, the answer is still yes if $M$ is binary since the class of
binary matroids without a $M(K_4)$-minor is the class of series-parallel graphs
(a result due to Brylawski \cite{Bra}). Unfortunately, the answer is no in general,
for instance, the reader may take the matroid $P_7$, illustrated in Figure~\ref{f1},
as a counterexample.

\begin{figure}[ht]
\includegraphics[scale=.5]{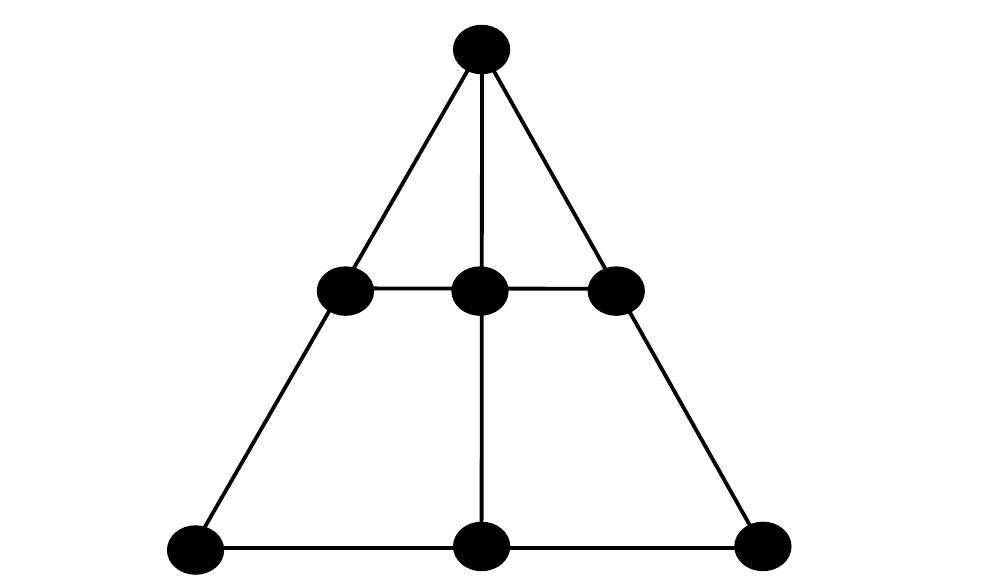}
\caption{Matroid $P_7$.}\label{f1}
\end{figure}

In the case of ternary matroids, we prove the following.
\begin{theorem}\label{th:ternary}
Every simple ternary matroid $M$ such that every element belongs to at least $3$ triangles
contains a $P_7$ or a $M(K_4)$ minor or contains the matroid $U_{2,4}$ as a submatroid.
\end{theorem}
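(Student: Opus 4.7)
The plan is to apply Theorem~\ref{th:main} and then work within the structure of ternary matroids. The theorem gives $M$ a $U_{2,4}$, $F_7$, or $M(K_5)$ minor. Since $F_7$ is not representable over $\mathrm{GF}(3)$, it cannot be a minor of $M$; and if $M(K_5)$ is a minor, then $M(K_4)$, being itself a minor of $M(K_5)$ (delete all edges at one vertex), is a minor of $M$ and we are done. It remains to handle the case where $M$ has $U_{2,4}$ as a minor but not as a submatroid, and produce a $P_7$ or $M(K_4)$ minor.

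Next I would choose a minor $M/C\setminus D \cong U_{2,4}$ with $|C|$ minimum and $C$ independent in $M$; let $\{a,b,c,d\}$ denote its ground set. By assumption $|C|\ge 1$; fix $e\in C$ and let $N$ denote the simplification of $M/(C\setminus\{e\})$. Minimality of $|C|$ forces $r_N(\{a,b,c,d\})=3$ while $r_{N/e}(\{a,b,c,d\})=2$, which entails that within the rank-$3$ flat spanned by $\{a,b,c,d,e\}$ in $N$, the element $e$ is parallel to none of $a,b,c,d$ and lies off every line through two of them; in particular no triangle of this restriction contains $e$. If moreover $\{a,b,c,d\}$ contained no triangle, the restriction would be a simple rank-$3$ matroid on five elements with no triangle, namely $U_{3,5}$, excluded in ternary matroids. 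Hence $\{a,b,c,d\}$ contains a triangle, say $\{a,b,c\}$, with $d$ off the line.

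Then I would exploit the hypothesis that every element of $M$ lies in at least three triangles. After a short argument tracking which triangles through $e$ survive the contractions $C\setminus\{e\}$ (and which are produced by simplification), $e$ still belongs to at least three triangles of $N$. Since none of these can sit inside $\{a,b,c,d,e\}$, each supplies a distinct new line through $e$ meeting at least one new point. The triangle $\{a,b,c\}$, the off-line point $d$, the three new lines through $e$, and the ternary constraints (excluding $U_{2,5}$, $U_{3,5}$, $F_7$, $F_7^{*}$ as minors) should then leave only configurations that either contain $U_{2,4}$ as a submatroid or admit $M(K_4)$ or $P_7$ as a minor. The case $|C|\ge 2$ reduces to $|C|=1$ by induction on $|C|$, applying the $|C|=1$ analysis inside the simpler matroid $M/(C\setminus\{e\})$.

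The main obstacle will be the final rank-$3$ case analysis, where one must enumerate every ternary configuration consistent with the structural data above and check that each contains one of the required matroids. A second, more technical, point is ensuring that at least three triangles through $e$ genuinely persist in $N$ after the contractions in $C\setminus\{e\}$: some triangles of $M$ are destroyed by collinearity with the contracted set, while simplification may identify new ones, and the net count needs to be tracked carefully.
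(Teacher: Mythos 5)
Your opening reduction is fine as far as it goes: Theorem~\ref{th:main} applies, $F_7$ is not ternary, and an $M(K_5)$ minor yields an $M(K_4)$ minor. But this only disposes of the easy cases and leaves exactly the hard one --- $M$ has a $U_{2,4}$ minor and you must upgrade this to a $P_7$ or $M(K_4)$ minor or a $U_{2,4}$ \emph{submatroid} --- and your plan for that case has several genuine gaps. First, the claim that three triangles through $e$ survive in $N=si(M/(C\setminus\{e\}))$ is asserted, not proved: a triangle $\{e,f,g\}$ of $M$ degenerates whenever $f$ or $g$ falls into $\mathrm{cl}(C\setminus\{e\})$ or the pair becomes dependent after contraction, and nothing in your setup prevents all three triangles through $e$ from collapsing. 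Second, even granting three surviving triangles through $e$, their third points need not lie in the rank-$3$ flat spanned by $\{a,b,c,d,e\}$, so the ``final rank-$3$ case analysis'' is not actually confined to a bounded configuration; you would be enumerating structures of unbounded rank. Third, the proposed induction on $|C|$ is unfounded, since $M/(C\setminus\{e\})$ need not satisfy the hypothesis that every element lies in three triangles. Fourth, and most structurally, your analysis takes place in a contraction minor $N$, but the theorem's third outcome is a $U_{2,4}$ \emph{restriction of $M$ itself}; four collinear points of $N$ only witness a $U_{2,4}$ minor of $M$, which (as the paper's $AG(2,3)$ discussion shows) is strictly weaker, so your framework cannot deliver that conclusion.

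For comparison, the paper does not go through Theorem~\ref{th:main} at all. It assumes $M$ has no $M(K_4)$ minor and invokes Oxley's classification of $3$-connected ternary matroids with no $M(K_4)$ minor (whirls, $J$, and the $3$-connected minors of $S(5,6,12)$), checking --- partly by computer --- that each either has two elements in fewer than two triangles or contains $P_7$; the non-$3$-connected case is handled by $1$- and $2$-sum decomposition together with Lemma~\ref{lem:elemsum}, and the $U_{2,4}$-submatroid outcome is extracted by a direct argument about the $2$-sum when the $3$-connected part is $U_{2,4}$. If you want to pursue your route, you would need, at minimum, a lemma controlling how triangles persist under the contraction you perform, a bound on the rank of the configuration you enumerate, and a separate mechanism for producing $U_{2,4}$ as a restriction of $M$ rather than of a minor.
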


The above yielded us to consider the following

\begin{question} Does there exist two finite lists $\mathcal L$ and $\mathcal S$ of matroids such that
(a) for each $M\in \mathcal L \cup \mathcal S$, each element $e\in M$ belongs to at least $t$ triangles, and
(b) for any matroid $M$ such that each of its elements belong to $t$ triangles, $M$ contains one of the matroids in $\mathcal L$ as a minor
or $M$ contains one of the matroids in $\mathcal S$ as a submatroid ?
\end{question}

It is easy to see that $U_{2,k}$ will belong to either $\mathcal L$ or $\mathcal S$ since it is a matroid
with smallest rank such that each element belong to $t$ triangles for $k$ big enough (depending on $t$).
Moreover, the matroid $M(K_{t+2})$ will also
always be contained in one of these lists since each edge of $K_{t+2}$ belongs to exactly $t$ triangles.
We finally mention the following generalization of Nevo's result:

\begin{align}\label{gen-nevo}
&\text{If every element of a simple regular matroid $M$ belongs to at
least $r-2$}\\
&\text{triangles, with $3\le r\le 7$, then $M$ has $M(K_r)$ as a minor.}\nonumber
\end{align}

Although this can be proved by applying essentially the same methods as those used in the proof of Theorem \ref{th:main},
we rather prefer to avoid to do this here since the arguments need a more detailed treatment (specially when $r=6,7$). 

\section{Proof of Theorem~\ref{th:main}}

We start by recalling some basic definitions and results needed throughout the paper.
We shall denote by $\mathcal{C}(M)$ the set of circuits of a matroid $M$.
Let $k$ be a positive integer. Then, for a matroid $M$, a partition $(X,Y)$ of
$E(M)$ is a {\em $k$-separation} if $\min\{|X|,|Y|\}\ge k$ and $r(X)+r(Y)-r(M)\le k-1$.
$(X,Y)$ is called an {\em exact $k$-separator} if $r(X)+r(Y)-r(M)=k-1$. $M$ is called
{\em $k$-separated} if it has a $k$-separation. If $M$ is $k$-separated for some $k$,
then the connectivity $\lambda(M)$ of $M$ is $\min\{j : M \text{ is } j\text{-separated }\}$;
otherwise we take $\lambda(M)$ to be $\infty$. We say that a matroid is {\em $k$-connected}
if $\lambda(M) \geq k$.
\medskip

Let $M_1$ and $M_2$ be two matroid with non-empty ground set $E_1$ and $E_2$
respectively. Let
\[\mathcal{C'} = \mathcal{C}(M_1 \setminus (E_1 \cap E_2)) \cup \mathcal{C}(M_2 \setminus (E_1 \cap E_2)) \cup
\{ C_1 \Delta C_2 \: : \: C_i \in \mathcal{C}(M_i) \textrm{ for } i = 1,2\}.\]
We denote by $\mathcal{C}$ the set of minimal elements (by inclusion) of $\mathcal{C'}$.
\begin{itemize}
\item If $|E_1 \cap E_2| = 0$, then $\mathcal{C}$ is the set of circuits of a matroid with support $E_1 \Delta E_2$ called
the {\em $1$-sum} or {\em direct sum} of $M_1$ and $M_2$ and denoted by $M_1 \oplus_1 M_2$.
\item If $|E_1 \cap E_2| = 1$, $|E_1|,|E_2| \geq 3$ and $E_1 \cap E_2$ is not a loop or a coloop of either $M_1$
or $M_2$, then $\mathcal{C}$ is the set of circuits of a matroid with support $E_1 \Delta E_2$ called the {\em $2$-sum}
of $M_1$ and $M_2$ and denoted by $M_1 \oplus_2 M_2$.
\item If $M_1$ and $M_2$ are binary matroids with $|E_1 \cap E_2| = 3$, $|E_1|,|E_2| \geq 7$,
such that $E_1 \cap E_2$ is a circuit of both $M_1$ and $M_2$ and such that $E_1 \cap E_2$
contains no cocircuit of either $M_1$ or $M_2$, then $\mathcal{C}$ is the set of circuits of a binary matroid with support
$E_1 \Delta E_2$ called the {\em $3$-sum} of $M_1$ and $M_2$ and denoted by $M_1 \oplus_3 M_2$.
\end{itemize}

The following structural result is a consequence of Seymour's results in
\cite{Seymour} (see also \cite[Corollary 11.2.6]{Oxley}):

\begin{align}\label{sey}
& \text{\cite[(6.5)]{Seymour} Every binary matroid with no $F_7$ minor can be obtained by a sequence}\\
& \text{$1$- and $2$-sums of regular matroids and copies of $F^*_7$}.\nonumber
\end{align}

The following results, in relation with $k$-separations, are also due to Seymour \cite{Seymour1}.

\begin{align}\label{sey1}
& \text{\cite[(2.1)]{Seymour1} If $(X,Y)$ is a $1$-separator of $M$ then $M$ is the $1$-sum of $M|_X$ and $M|_Y$;}\\
& \text{and conversely, if $M$ is the $1$-sum of $M_1$ and $M_2$ then $(E(M_1),E(M_2))$ is a}\nonumber \\
&\text{$1$-separation of $M$, and $M_1, M_2$ are isomorphic to proper minors of $M$.}\nonumber
\end{align}

\begin{align}\label{sey2}
& \text{\cite[(2.6)]{Seymour1} If $(X,Y)$ is an exact 2-separator of $M$ then there are matroids $M_1$}\\
& \text{$M_2$ on $X\cup\{z\}, Y\cup\{z\}$ respectively (where $z$ is a new element) such that}\nonumber \\
& \text{$M$ is the 2-sum of $M_1$ and $M_2$. Conversely, if $M$ is the 2-sum of $M_1$ and $M_2$}\nonumber\\
& \text{then $(E(M_1)-E(M_2),E(M_2)-E(M_1))$ is an exact 2-separation of $M$,}\nonumber \\
& \text{and $M_1, M_2$ are isomorphic to proper minors of $M$.}\nonumber
\end{align}

\begin{align}\label{sey4}
&\text{\cite[(4.1)]{Seymour1} If $M$ is a $3$-connected binary matroid and is the 3-sum of two}\\
&\text{matroids $M_1$ and $M_2$, then $M_1$ and $M_2$ are isomorphic to proper minors of $M$.}\nonumber
\end{align}

\begin{align}\label{sey5}
&\text{\cite[(2.10)]{Seymour1} A $2$-connected matroid $M$ is not $3$-connected if and only if }\\
&\text{$M = M_1 \oplus_{2} M_2$ for some matroids $M_1$ and $M_2$, each of which is isomorphic}\nonumber \\
&\text{ to a proper minor of $M$.}\nonumber
\end{align}

We shall use \eqref{sey}-\eqref{sey5} and the following three lemmas to prove
our main theorem. 
We will denote by $si(M)$ the matroid obtained from $M$ by deleting all its loops and by identifying parallel elements.

\begin{lemma}\label{lem:elemsum}
Let $M_1$ and $M_2$ be two matroids with ground sets $E_1$ and $E_2$ respectively such that
$M = M_1 \oplus_k M_2$, $1\le k\le 3$ and such that $M$ is a simple matroid. Moreover, we suppose that $M$ is binary when  $k=3$.
Let $e \in E_1 \setminus E_2$ such that $\{e,x\}\in\mathcal{I}(M_1)$ for any element 
$x\in E_1 \cap E_2$ and suppose that $e$ belongs to $t$ triangles of $M$. Then, $e$ belongs to at least $t$ triangles of $si(M_1)$.
\end{lemma}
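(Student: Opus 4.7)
The plan is to exhibit, for each $k\in\{1,2,3\}$, an injection from the set of triangles of $M$ through $e$ to the set of triangles of $si(M_1)$ through $e$. Set $T := E_1 \cap E_2$. As a preliminary observation, since $M$ is simple, $e$ is not parallel in $M_1$ to any element of $E_1 \setminus T \subseteq E(M)$ (otherwise $M$ would contain a $2$-element circuit), and by hypothesis $\{e,x\}$ is independent in $M_1$ for every $x \in T$. Hence the parallel class of $e$ in $M_1$ is the singleton $\{e\}$, so $e$ is an element of $si(M_1)$. The case $k=1$ is then immediate: $M_1$ is a restriction of $M$, so $M_1$ is simple, $si(M_1) = M_1$, and the triangles of $M$ through $e$ coincide with those of $M_1$ through $e$.

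For $k=2$, write $T = \{z\}$. Using the circuit description of $M_1 \oplus_2 M_2$ together with the hypothesis $\{e,z\} \in \mathcal{I}(M_1)$, every triangle of $M$ through $e$ is of exactly one of two types: (A) a triangle of $M_1$ contained in $E_1 \setminus \{z\}$; or (B) a triangle $\{e,g,f\}$ arising as $C_1 \Delta C_2$ from a unique triangle $C_1 = \{e,z,g\}$ of $M_1$ and a unique $2$-circuit $C_2 = \{z,f\}$ of $M_2$ (the alternative $|C_1| = 2$ would force $C_1 = \{e,z\}$, contradicting the hypothesis). I would send each type-(A) triangle to itself and each type-(B) triangle to the triangle $\{e,z,g\}$, and check that the images are triangles of $si(M_1)$: any loop or parallel pair among their elements would contradict the minimality of $C_1$, the independence of $\{e,z\}$, or the simplicity of $M$. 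Injectivity is obtained by a short (AA), (AB), (BB) case analysis in which any coincidence in $si(M_1)$ is traced back to a $2$-circuit of $M$ produced by symmetric-differencing the relevant circuits; for instance, two distinct type-(B) triangles $\{e,g,f\} \neq \{e,g,f'\}$ collapsing to $\{e,z,g\}$ would yield $\{f,f'\} = \{z,f\} \Delta \{z,f'\} \in \mathcal{C}(M)$, contradicting simplicity.

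The case $k = 3$ follows the same blueprint, with $T = \{s_1,s_2,s_3\}$ a triangle of both $M_1$ and $M_2$ and binarity of $M$ used twice. First, one shows that every triangle $\Delta$ of $M$ through $e$ admits a decomposition $\Delta = C_1 \Delta C_2$ with $C_1 \in \mathcal{C}(M_1)$ a triangle meeting $T$ in exactly one element and $C_2 \in \mathcal{C}(M_2)$ a $2$-circuit: decompositions with $|C_1 \cap T| = 2$ can be replaced by $C_1 \Delta T$ (a cycle of the binary matroid $M_1$ which, by a size count, is a single triangle of $M_1$), the cases $|C_1 \cap T| \in \{0,3\}$ are excluded by the independence hypothesis, by simplicity of $M$, or by the circuit-minimality of $T$, and binarity also rules out the configuration where both non-$e$ elements of $\Delta$ lie in $E_2 \setminus T$ (the resulting decomposition would force a $2$-circuit involving $e$ and an element of $T$). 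The map $\Delta \mapsto C_1$ and its injectivity are then argued as in the $k=2$ case. The main obstacle is the injectivity step here: an element $s \in T$ might a priori be parallel in $M_1$ to some $a' \in E_1 \setminus T$, which would collapse a type-(A) triangle with a type-(B) triangle in $si(M_1)$; this is ruled out by combining the parallel pair $\{s,a'\} \in \mathcal{C}(M_1)$ with the $2$-circuit $\{b,s\}$ of $M_2$ coming from the type-(B) triangle, which yields $\{a',b\} \in \mathcal{C}(M)$ and again contradicts simplicity.
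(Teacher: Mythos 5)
Your proposal is correct and follows essentially the same route as the paper's proof: decompose each triangle through $e$ as $C_1\Delta C_2$, use the hypothesis and the simplicity of $M$ to force $C_2$ to be (or to reduce to) a $2$-circuit identifying an element of the triangle with a parallel element of $E_1\cap E_2$, replace to obtain a triangle of $si(M_1)$, and verify injectivity by producing a forbidden $2$-circuit of $M$ from any collision. The only cosmetic difference is that where the paper invokes Fournier's circuit-elimination characterization of binary matroids you take symmetric differences with the common triangle $E_1\cap E_2$ directly; just note that when $|C_1|=3$ and $|C_1\cap (E_1\cap E_2)|=2$ your ``size count'' yields the $2$-element cycle $\{e,z\}$ rather than a triangle, which is precisely the contradiction with $\{e,z\}\in\mathcal{I}(M_1)$ recorded in the paper's Case a).
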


\begin{proof}
Let $e \in E_1 \setminus E_2$ such that $\{e,x\}\in\mathcal{I}(M_1)$ for any element 
$x\in E_1 \cap E_2$ and suppose that $e$ belongs to $t$ triangles of $M$. We shall show that
$e$ belongs to at least $t$ triangles of $si(M_1)$.
\medskip

Let $T = \{e,f,g\}$ be one of the $t$ triangles of $M$ containing $e$ and note that $e,g,f \not\in E_1 \cap E_2$. 
By definition of the $k$-sum, either $T$ is a circuit of
$\mathcal{C}(M_1)$ and we are done, or $T$ can be written as $C_1 \Delta C_2$
where $C_i$ is a circuit of $M_i, i=1,2$. Since $M$ is simple and $E_1 \cap E_2$
contains no loop (by definition of $k$-sum) then neither $M_1$ nor $M_2$ contain a 
loop, and thus $|C_1|,|C_2| \geq 2$.
\medskip

If $|C_1| = 2$, say $C_1 = \{e,x\}$, then $x \in E_1 \cap E_2$ (otherwise $e$ and $x$ would be parallel elements in $M$, contradicting the simplicity of $M$).
So, $e$ is parallel to an element $x$ with $x \in E_1 \cap E_2$  contradicting the hypothesis of the lemma. We have then that $|C_1|\ge 3$
\medskip

If $|C_2| = 2$, say $C_2 = \{g,x\}$, then $x \in E_1 \cap E_2$ (otherwise $g$ and $x$ would be parallel elements in $M$, contradicting the simplicity of $M$).
Since $f \in T = C_1 \Delta C_2$ then $f \in E_1$ and since $x\in E_1$ is parallel to $g$ then $\{e,f,x\}$ is a triangle of $M_1$.
\medskip

Let us suppose now that $|C_1|,|C_2| \geq 3$. Since $|C_1 \Delta C_2|=|T|=3$ then $|C_1 \cap C_2| \geq 2$.
So we are in the case where $k = 3$ and thus we can suppose that $M$ is binary.
Moreover since $E_1 \cap E_2$ is a circuit of both $M_1$ and $M_2$,
then $C_1$ and $C_2$ contain at most two elements of $E_1 \cap E_2$ or they are equal
to $E_1 \cap E_2$. In the latter, we have that $e \in E_1 \cap E_2$ which is a contradiction since 
$e \in E_1 \setminus E_2$.  We thus suppose that we are in the former. Hence $|C_1|+|C_2| = 7$
and we can write $C_1 \cap C_2 = \{x,y\}$. 
Therefore one of $|C_1|$ or $|C_2|$ has cardinality at least $4$.
\medskip

We shall use a result due to Fournier \cite{Fournier} stating that a matroid
$M$ is binary if and only if whenever $C_1$ and $C_2$ are distinct
circuits and $\{p,q\}$ are elements of $C_1 \cap C_2$, then
there is a circuit in $M$ contained in $C_1 \cup C_2 \setminus \{p,q\}$.
\medskip

We have two cases.

Case a) $|C_2| = 4$ and $|C_1| = 3$. We write $C_1 = \{e,x,y\}$.
By applying Fournier's result to circuits $E_1 \cap E_2 = \{x,y,z\}$ and
$C_1 = \{e,x,y\}$ we obtain that $\{e,z\}$ contains a circuit and since
by hypothesis neither $e$ nor $z$ is a loop, then $e$ and $z$ are parallel
elements, contradicting the hypothesis because $z \in E_1 \cap E_2$.
\medskip

Case b) $|C_1| = 4$ and $|C_2| = 3$. We write $C_2 = \{x,y,g\}$. By Fournier's
result applied to circuits $\{x,y,z\}$ and $C_2$, we deduce that $g$ and $z$ are parallel
elements. Thus $(T \setminus g) \cup \{z\}$ is a triangle of $si(M_1)$ and is
not a triangle of $M$.

\medskip
It remain to check that two different triangles of $M$ containing $e$ induce, by the previous construction,
two different triangles in $si(M_1)$. Let $T$ and $T'$ be two different triangles of $M$ containing $e$
that are not triangles of $M_1$. Note that $T$ and $T'$ have two elements of $M_1$ because otherwise,
as we have previously seen, $e$ would be parallel to an element of $E_1 \cap E_2$, contradicting the hypothesis.
We denote by $w$ (resp. $w'$) the only element of $T$ (resp. $T'$) that belongs to $M_2$.
By construction the two triangles of $si(M_1)$ obtained from $T$ and $T'$ respectively contain $T \setminus \{w\}$
and $T' \setminus \{w'\}$. If $T \setminus \{w\} \neq T' \setminus \{w'\}$, the resulting triangles of $si(M_1)$
are different. Suppose now that
$T \setminus \{w\} = T' \setminus \{w\}$. In the above construction, the elements $w$ and $w'$
are replaced by elements of $E_1 \cap E_2$ repectively parallel to $w$ and $w'$ respectively. Note that
$w$ and $w'$ cannot be parallel to a common element of $E_1 \cap E_2$ (indeed if $w$ and $w'$ were parallel,
it would contradict the simplicity of $M$). So $w$ and $w'$ are parallel to two
distinct elements of $E_1 \cap E_2$, and thus the triangles $T$ and $T'$ induces two different triangles
in $si(M_1)$.
\end{proof}

\begin{lemma}\label{lem1}
Let $M$ be a simple connected graphic matroid such that each of its elements belongs
to at least three triangles except maybe for one element $e$ or for some elements
of a given triangle $T$ of $M$. If $M$ is not isomorphic to $e$ or $T$,
then $M$ contains $M(K_5)$ as a minor.
\end{lemma}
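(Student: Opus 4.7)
The plan is to proceed by induction on $|E(M)|$, with base case $M = M(K_5)$, which is the smallest simple connected graphic matroid in which every element lies in at least three triangles. For the inductive step I would first perform a connectivity reduction. If $M$ is not $3$-connected, then by \eqref{sey1}, \eqref{sey2} and \eqref{sey5} we may write $M = M_1 \oplus_k M_2$ for some $k \in \{1,2\}$, where $M_1$ and $M_2$ are proper minors of $M$. Lemma~\ref{lem:elemsum} then tells us that every edge in $E(M_i) \setminus E(M_{3-i})$ that belongs to at least three triangles of $M$ still belongs to at least three triangles of $si(M_i)$. The idea is to choose the side $i$ so that the (at most one) virtual common element, together with the pre-existing exception inherited from $M$, either forms a single exceptional edge of $si(M_i)$ or is contained in a single triangle of $si(M_i)$; the induction hypothesis then produces an $M(K_5)$-minor in $si(M_i)$, which lifts to $M$.

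Once $M = M(G)$ is $3$-connected, I would invoke Wagner's structure theorem: any $3$-connected graph with no $K_5$-minor is either planar or isomorphic to the Wagner graph $V_8$. Since $V_8$ is triangle-free, no edge of $V_8$ lies in any triangle at all, whereas the hypothesis requires all but at most three of its $12$ edges to lie in at least three triangles; this is an immediate contradiction.

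It then remains to rule out the planar case. From the hypothesis one has $3 t(G) = \sum_{e \in E(G)} t(e) \geq 3(|E(G)| - 3)$, so $t(G) \geq |E(G)| - 3$. Combining this with the Hakimi--Schmeichel bound $t(G) \leq 3n - 8$ for simple planar graphs with $n \geq 5$ yields $|E(G)| \leq 3n - 5$, which together with the standard planar bound $|E(G)| \leq 3n - 6$ forces $G$ to be very close to a planar triangulation. A finer face-counting argument, distinguishing facial triangles from separating ones and using the fact that in any $3$-connected planar embedding each edge bounds exactly two faces, then shows that no such $G$ can have $|E(G)|-3$ edges each in three or more triangles, producing the desired contradiction.

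The main obstacle I foresee is the $2$-sum reduction: the virtual common element may itself fail the three-triangle condition in $si(M_i)$, and combined with the pre-existing exception of $M$, one must check that the resulting exceptional set in $si(M_i)$ still fits the ``single edge or single triangle'' template, so that the induction hypothesis applies. The planar counting step is also delicate and requires combining Euler's formula with the triangle bound more carefully than the crude argument sketched above, especially to handle the case where $G$ is a near-triangulation with many separating triangles.
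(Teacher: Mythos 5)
Your strategy diverges completely from the paper's and contains two genuine gaps. First, the version of Wagner's theorem you invoke is false: a $3$-connected graph with no $K_5$-minor need not be planar or isomorphic to $V_8$. Wagner's theorem only says that $K_5$-minor-free graphs are obtained from planar graphs and $V_8$ by clique-sums of order at most $3$, and a clique-sum along a triangle can leave the result $3$-connected and non-planar. For instance, gluing two copies of $K_5\setminus e$ along their separating triangle yields $K_{3,4}$ plus a triangle on the side of size $3$; this graph is $3$-connected, non-planar, not $V_8$, and has no $K_5$-minor. So after your connectivity reduction you would still have to handle $3$-sums, and your planarity-based counting does not apply to the graphs that arise there. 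Second, the planar case is not actually closed: from $t(G)\ge |E(G)|-3$ and the Hakimi--Schmeichel bound $t(G)\le 3n-8$ you only obtain $|E(G)|\le 3n-5$, which is weaker than the bound $|E(G)|\le 3n-6$ that every simple planar graph already satisfies, so there is no contradiction; the ``finer face-counting argument'' is precisely the missing content, not a detail. You also correctly flag, but do not resolve, the difficulty that in the $2$-sum reduction the basepoint together with the inherited exceptional elements of $M$ (which may be split across the two sides when the exceptional triangle is a crossing circuit $C_1\Delta C_2$) need not fit the ``one edge or one triangle'' template required to apply the induction hypothesis.

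The paper needs none of this machinery: it gives a one-paragraph local argument. Since $M$ is not isomorphic to the exceptional configuration, $G$ has a vertex $u$ whose incident edges are all non-exceptional. Each edge $uv$ lies in at least three triangles, i.e., $u$ and $v$ have at least three common neighbours, so the subgraph induced by $N(u)$ has minimum degree at least $3$. By Dirac's theorem a graph with no $K_4$-minor has a vertex of degree at most $2$, so $G[N(u)]$ has a $K_4$-minor, and adding $u$ yields a $K_5$-minor. This handles planar and non-planar graphs uniformly, with no induction, no decomposition, and no counting; I would recommend abandoning the structural route in favour of this local argument.
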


\begin{proof}
Let $G$ be a graph such that $M=M(G)$. We will prove that $G$ contains a $K_5$ minor.
We will denote by $X$ the set of vertices corresponding to the extremities of the edge $e$
or to the vertices of the triangle $T$ depending on the case. In particular, we have that
$|X| \leq 3$. Since $M(G)$ is simple, then $G$ has at least $4$ vertices, so there exists
$u \in V(G) \setminus X$. Since $M(G)$ is connected then $G$ is connected too and so
$\deg(u) \geq 1$. Moreover, every edge incident to $u$ belongs to at least $3$ triangles,
so the graph induced by $N(u)$ (the set of neighbors of $u$) has minimum degree at least $3$.
Dirac \cite{Dirac} proved that if $G$
is a non-null simple graph with no subgraph contractible to $K_4$, then $G$ has a
vertex of degree $\le 2$. Therefore, by Dirac's result, the graph induced by the
vertices in $N(u)$ contain a $K_4$ minor and so the graph induced by $N(u)$ together
with $u$ contain a $K_5$ minor.
\end{proof}

\begin{lemma}\label{lem2}
Let $M$ be a simple matroid and let $X$ be a set of element of $M$ consisting
of either an element $e$ or of the elements of a given triangle $T$ of $M$.
If each element of $M$ belongs to at least three triangles except for the elements of $X$
and if $M$ is not isomorphic to $M|_{X}$, then $M$ is not a cographic matroid.
\end{lemma}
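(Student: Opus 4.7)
The plan is to argue by contradiction: suppose $M$ is cographic, so $M = M^{*}(G)$ for some graph $G$, which we may assume connected. Since $M$ is simple, $G$ is loopless, bridgeless and has no $2$-edge-cut, so in particular $G$ is $3$-edge-connected. Under this identification, a triangle of $M^{*}(G)$ is precisely a minimal $3$-edge-cut (a $3$-bond) of $G$, so the hypothesis translates into the following graph-theoretic condition: every edge of $G$ outside $X$ lies in at least three $3$-bonds of $G$.

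First I would handle the case where $G$ is planar. Its planar dual $G^{*}$ is then a simple connected graph, and $M = M^{*}(G) = M(G^{*})$ is also graphic. The triangles of $M(G^{*})$ (the $3$-cycles of $G^{*}$) coincide with those of $M^{*}(G)$ (the $3$-bonds of $G$), so every edge of $G^{*}$ outside $X$ lies in at least three triangles of $M(G^{*})$. Since $X$ (being a single edge or a triangle) sits inside a single block of $G^{*}$, one can apply Lemma~\ref{lem1} either to $M(G^{*})$ itself (if it is $2$-vertex-connected) or to a nontrivial block of $G^{*}$ disjoint from $X$, producing $M(K_{5})$ as a minor of $M$. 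But cographic matroids are closed under minors while $M(K_{5})$ is not cographic (since $K_{5}$ is non-planar), a contradiction.

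For the general case, the plan is to reduce to the planar case by decomposition. If $M$ is not $3$-connected, write $M = M_{1} \oplus_{k} M_{2}$ with $k \in \{1,2\}$ via \eqref{sey1} or \eqref{sey2}; both summands are proper cographic minors of $M$, and by Lemma~\ref{lem:elemsum} the triangle condition propagates to $si(M_{i})$ for whichever summand does not contain $X$, so induction on $|E(M)|$ finishes the argument. When $M$ is $3$-connected, $G$ is $3$-vertex-connected (by Tutte), and the aim is to show that the triangle hypothesis forces $G$ to be planar, thereby reducing to the previous paragraph. The main obstacle will be this last step: proving that a simple $3$-vertex-connected graph in which every edge outside $X$ lies in at least three $3$-bonds is necessarily planar. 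A natural route is either to rule out $K_{5}$ and $K_{3,3}$ minors in $G$ directly (showing that their presence would force some edge outside $X$ to lie in at most two $3$-bonds) or to invoke the cactus representation of $3$-edge-cuts to bound, in non-planar $G$, the number of $3$-bonds through a generic edge.
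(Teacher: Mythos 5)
Your setup (passing to a graph $G$ with $M=M^{*}(G)$, observing that $G$ is $3$-edge-connected and that triangles of $M$ are $3$-bonds of $G$) matches the paper, and your planar case is a legitimate alternative ending: dualize, apply Lemma~\ref{lem1} to get an $M(K_5)$-minor, and contradict the minor-closedness of cographic matroids. But the argument has a genuine gap at exactly the point you flag yourself: the claim that a $3$-connected graph in which every edge outside $X$ lies in at least three $3$-bonds must be planar is never proved, and neither of the two ``natural routes'' you mention is carried out. This is not a detail one can wave at --- it is essentially the whole content of the lemma. Note that the hypothesis is in fact vacuous (no such graph exists, planar or not), so ``reduce to the planar case'' is a detour through a statement that is only vacuously true and whose proof would already require the full argument. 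It is far from clear that excluding $K_5$ and $K_{3,3}$ minors of $G$ can be done edge-locally, since the relevant structure ($3$-edge-cuts) is global; and the cactus-representation route, while plausible, is a substantial piece of work you have not written down. There is also a smaller unaddressed point in your inductive step: when $X$ is a triangle it may straddle a $2$-separation, in which case ``the summand not containing $X$'' need not exist.

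For comparison, the paper avoids planarity and duality in the general case entirely. It isolates a vertex $v$ (an endpoint of the exceptional element, or the apex of the exceptional triangle when that cut is trivial), notes that an edge lying in three $3$-edge-cuts lies in a \emph{non-trivial} one, and then proves by an uncrossing argument that no $3$-edge-connected graph can have every edge of $G\setminus\{v\}$ in a non-trivial $3$-edge-cut unless $G\setminus\{v\}$ is a stable set: one takes a non-trivial $3$-cut whose side $V_2$ avoiding $v$ is minimal, crosses it with a non-trivial $3$-cut through an edge inside $G[V_2]$, and counts the six edges across the resulting four-part partition to reach a parity contradiction ($k_e+k'_e=3$ forces $k_e\neq k'_e$ while $k_f=k'_f$). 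If you want to complete your proof, you should either prove your planarity claim in full or replace the whole reduction by a direct argument of this kind.
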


\begin{proof}
We proceed by contradiction. Suppose that there exists a cographic matroid $M$ contradicting the lemma.
Let $G$ be the graph such that $E(G)$ is the ground set of $M$, and such that the circuits
of $M$ are the edge cuts of $G$. We can suppose that $G$ is connected. Moreover, since $M$ is simple (i.e. it contains no loop no
parallel elements), the graph $G$ has no edge cut of size one or two and thus $G$ is 3-edge connected.
Let us call an edge cut \emph{trivial} if it corresponds to all the edges incident to a given vertex $v$.
Note that an edge that belongs to (at least) three 3-edge cuts of $G$, belongs to at least one non-trivial 3-edge cut.
\smallskip

In the case where $M$ has an element $a$ that does not belong to three
triangles, we denote $v$ one of the endpoints of $a$ in $G$.
Now in the case where $M$ has a triangle $T=\{a,b,c\}$
which elements do not necessarily belong to three triangles, the edge
cut $\{a,b,c\}$ in $G$ is either trivial and then we denote $v$ the degre 3
vertex incident to $a$, $b$ and $c$, or non-trivial and then every edge
of $G$ (including $a$, $b$ and $c$) belongs to a non-trivial 3-edge
cut. For every vertex $v \in V(G)$, the graph $G \setminus \{v\}$
is not a stable set. Indeed, suppose that every edge of $G$ is incident to $v$ then the graph $G$
is isomorphic to a star (with eventually multiples edges and loops on $v$), and so, by a result of Whitney \cite{Whitney} the dual matroid of $M(G)$
(which is isomorphic to $M$) is a graphic matroid associated to the dual graph $G^*$. Thus, since $G$ is a star (with eventually
multiple edges and loops on its center), then $G^*$ is also a star (with eventually multiple edges and loops on its center) of multiples edges.
This contradict the fact that each element of $M$ except at most $3$ belongs to at least $3$ triangles.
which contradicts the simplicity of $M$.

We claim that

\begin{align}\label{claim1}
&\text{there is no 3-edge connected graph $G$, with a vertex $v$, such that every edge} \\
&\text{$e\in E(G\setminus \{v\})$ belongs to some non-trivial 3-edge cut of $G$ and}\nonumber \\
&\text{such that $G \setminus \{v\}$ is not a stable set.}\nonumber
\end{align}

It is clear that the above claim contradicts the existence of $G$ and thus implies the lemma.
We may now prove \eqref{claim1} by contradiction. So let us consider a graph $G$
that is 3-edge connected with a distinguished vertex $v$, and such
that every edge $e\in E(G\setminus \{v\})$ belongs to at least one
non-trivial 3-edge cut of $G$. By hypothesis, the graph $G \setminus \{v\}$
is not a stable set, so there are edges in $G\setminus \{v\}$, $G$ has some non-trivial $3$-edge
cuts.  Let $\{e_1,e_2,e_3\} \subset E(G)$ be a non-trivial 3-edge cut
of $G$, partitioning $V(G)$ into two sets $V_1$ and $V_2$ such that $v
\in V_1$ and such that $|V_2|$ is minimal (see Figure~\ref{f3}). As this edge cut is
non-trivial, there are at least two vertices in $V_2$, and as $G$ is
3-edge connected there is an edge $f_1$ in $G[V_2]$. By hypothesis,
let $\{f_1,f_2,f_3\} \subset E(G)$ be a non-trivial 3-edge cut of $G$,
partitioning $V(G)$ into two sets $X$ and $Y$ such that $v\in
X$. Consider now the refined partition defined by the following sets:
$V_1^X =V_1\cap X$, $V_1^Y =V_1\cap Y$, $V_2^X =V_2\cap X$, and $V_2^Y
=V_2\cap Y$.  Note that as $v\in V_1^X$ and as $f_1$ has both ends in
$V_2$, the sets $V_1^X$ $V_2^X$ and $V_2^Y$ are non-empty.  Note also
that by definition $|V_2| \leq |Y|$, and thus $|V_2^X| \leq
|V_1^Y|$. This implies that the set $V_1^Y$ is also non-empty.

\begin{figure}[ht]
\input{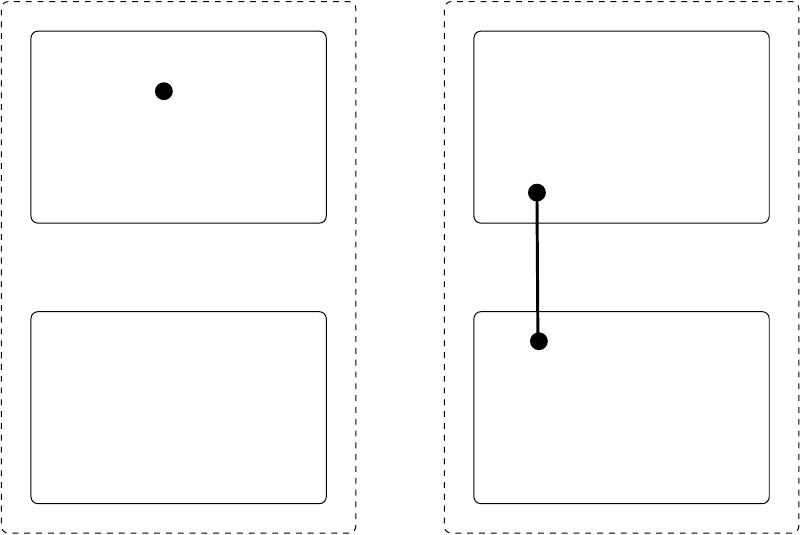_t}
\caption{The $3$-edge connected graph $G$, with edges cuts $\{e_1,e_2,e_3\}$ and $\{f_1,f_2,f_3\}$.}\label{f3}
\end{figure}

By construction, there are at most 6 edges across this partition (if
$\{e_1,e_2,e_3\}$ and $\{f_1,f_2,f_3\}$ are disjoint).  On the other
hand, as $G$ is 3-edge connected each subset of the partition (as they
are non-empty) has at least 3 edges leaving it.  This implies that
there are exactly 6 edges across the partition and that each set has
exactly 3 of them leaving it. Let $0\le k_e\le 3$ be the number of
edges from $\{e_1,e_2,e_3\}$ adjacent to $V_2^X$, and note that
$V_2^Y$ is adjacent to $k'_e =3 - k_e$ of these edges. On the other
hand, there is no edge of $\{f_1,f_2,f_3\}$ going across $V_1$ and
$V_2$, thus the number $k_f$ of edges from this set that are incident
to $V_2^X$ is the same as the number $k'_f$ of edges from this set
that are incident to $V_2^Y$. As $k_e \neq k'_e$ this contradicts the
fact that both $V_2^X$ and $V_2^Y$ are incident to exactly $k_e + k_f
= k_e' + k_f' = 3$ edges. This concludes the proof of the claim.
\end{proof}

We may now prove Theorem \ref{th:main}.
\smallskip

{\em Proof of Theorem \ref{th:main}.} We proceed by contradiction. Let $M$ be a
matroid such that every element belongs to at least three triangles except maybe
for one element $e$ or for some elements of a given triangle $T$ of $M$ and
assume that $M$ does not contain $U_{2,4}$, $F_7$ and $M(K_5)$ as a minor.
We also suppose $M$ minimal (for the minor relation) with this property.\\

We first notice that $M$ must be binary (since it contains no $U_{2,4}$-minor).
Moreover $M$ is $2$-connected otherwise, by \eqref{sey1},
$M$ can be written as $M_1 \oplus_1 M_2$, where $M_1$ and $M_2$ are two matroids, but then
by Lemma \ref{lem:elemsum}, one of $M_1,M_2$ (say $M_1$) is such that every
element belongs to at least $3$ triangles, and since both $M_1$ and $M_2$ are
proper minors of $M$ by \eqref{sey5}, then $M_1$ contradicts the minimality of $M$. Now suppose
that $M$ is $2$-connected but not $3$-connected, so by \eqref{sey2}, $M$
can be written as a $2$-sum of $M_1$ and $M_2$ and since $M$ is such that
each element belongs to at least $3$ triangles, by Lemma \ref{lem:elemsum},
each element of $si(M_1)$ except the ones of $E(M_1) \cap E(M_2)$ belongs
to at least $3$ triangles. But since $|E(M_1) \cap E(M_2)| \leq 1$ (by
definition of $2$-sum) and $si(M_1)$ is a proper minor of $M$,
then $si(M_1)$ contradicts the minimality of $M$. So we can assume that
$M$ is $3$-connected.\\

Since $M$ is binary and without $F_7$-minor then, by \eqref{sey}, either $M$ is
isomorphic to $F_7^*$, either $M$ is a regular matroid or $M$ can be written as $2$-sum of
two smaller matroids. But since $M$ is $3$-connected, by \eqref{sey5}, the
latter does not hold and for the former, it is easy to check that no element of
$F_7^*$ belongs to at least three triangles, a contradiction. So $M$ is a
$3$-connected regular matroid.\\

By Seymour's regular matroid characterization \cite{Seymour1},
$M$ is either graphic, cographic, isomorphic to $R_{10}$ or is a $3$-sum
of smaller matroids.\\

Suppose that $M$ is isomorphic to $R_{10}$. Note that for every element $e\in E(R_{10})$,
we have that $R_{10}\setminus e$ is isomorphic to $M(K_{3,3})$. Since $M(K_{3,3})$
is triangle free then every element of $R_{10}$ should be contained in every
triangle of $R_{10}$ implying that every triangle contains 10 elements, which
is a contradiction. Thus $R_{10}$ is triangle-free, a contradiction.
Moreover by Lemmas \ref{lem1} and \ref{lem2}, $M$ is neither graphic nor cographic.
Thus, $M$ can be written as a $3$-sum of smaller matroids.
Suppose that $M = M_1 \oplus_3 M_2$. Since the  only elements of $M$ not belonging to three triangles
of $M$ are either a single element or elements that belongs to a triangle of $M$,
then these elements are contained either in $M_1$ or $M_2$.
Without loss of generality we can assume that they are contained in $M_2$.
But then since $M$ is $3$-connected and binary then, by \eqref{sey5}, $si(M_1)$ is a proper minor
of $M$  and, by Lemma \ref{lem:elemsum}, is such that every element except maybe the elements
of $E(si(M_1)) \cap E(M_2)$ belong to at least $3$ triangles. This contradicts the minimality of $M$. \qed

\section{Proof of Theorem~\ref{th:ternary}}

In this section, we will prove Theorem~\ref{th:ternary} using the following theorem of Oxley \cite{Oxley1}.
\begin{align}\label{th:oxleytern}
&\text{Any $3$-connected ternary matroid with no $M(K_4)$ minor is either isomorphic} \\
&\text{to a whirl $W^r$, to the matroid $J$ or to one of the $15$ $3$-connected minors of}\nonumber \\
&\text{the Steiner matroid $S(5,6,12)$.}\nonumber
\end{align}

We will first prove the following lemma about $3$-connected matroids.
\begin{lemma}\label{lem:3connectter}
Let $M$ be a $3$-connected ternary matroid with no $M(K_4)$-minor with at least $2$ elements such that
every element belongs to at least $2$ triangles, except maybe for one element $e$, then $M$ contains
$P_7$ as a minor or is isomorphic to $U_{2,4}$.
\end{lemma}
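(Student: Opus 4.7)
The strategy is to apply Oxley's structural theorem \eqref{th:oxleytern}, which reduces the problem to checking three families: the whirls $W^r$ with $r \ge 2$, the matroid $J$, and the fifteen $3$-connected minors of $S(5,6,12)$. I would verify the conclusion of the lemma case by case, showing that in each case either the triangle-incidence hypothesis fails, the matroid is isomorphic to $U_{2,4}$, or it contains $P_7$ as a minor.

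\emph{Whirls.} The key starting observation is that $W^2 \cong U_{2,4}$, which immediately yields one of the allowed conclusions. For $r \ge 3$, relaxing the rim circuit of the wheel $M(W_r)$ does not create any new $3$-circuits, so the triangles of $W^r$ are exactly the $r$ spoke-rim-spoke triangles inherited from $M(W_r)$. Each of the $r$ rim elements therefore belongs to exactly one triangle. Since $r \ge 3$, at least two elements fail to lie in two triangles, contradicting the hypothesis which allows at most one such element.

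\emph{The matroid $J$ and the fifteen minors of $S(5,6,12)$.} For each of these finitely many explicitly described matroids, I would compute (or look up) the triangle-incidence of every element. For each matroid one of three conclusions must be reached: either at least two elements lie in fewer than two triangles, in which case the hypothesis is violated and the matroid is ruled out; or the matroid is $P_7$ itself (which is indeed one of the $3$-connected minors of $S(5,6,12)$); or the matroid sits strictly above $P_7$ in the minor order, so $P_7$ arises as a proper minor. An efficient way to organize this check is to traverse the minor lattice downwards starting from $S(5,6,12)$: whenever a matroid is identified as containing $P_7$ as a minor, we are done; otherwise, the small rank and size of the remaining candidates make a direct triangle count feasible, and one checks that too few triangles are available to satisfy the hypothesis.

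\emph{Expected main obstacle.} The principal difficulty is the finite but tedious case analysis over the sixteen fixed matroids ($J$ together with the fifteen minors of $S(5,6,12)$). Although each individual verification is elementary, controlling the argument uniformly requires a careful bookkeeping of which of these matroids admit $P_7$ as a minor and, for those that do not, an explicit triangle-by-triangle check. I expect the step that distinguishes the exceptional matroid $J$ from the others to require the most care, since its triangle structure is less symmetric than that of the whirls and does not fit into an obvious induction on the minor lattice of $S(5,6,12)$.
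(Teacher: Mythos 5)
Your proposal follows essentially the same route as the paper: invoke Oxley's characterization \eqref{th:oxleytern}, dispose of the whirls by noting $W^2\cong U_{2,4}$ and that each rim element of $W^r$ ($r\ge 3$) lies in only one triangle, and settle $J$ and the fifteen $3$-connected minors of $S(5,6,12)$ by a finite check (the paper cites Oxley for $J$ having a $P_7$-minor and reports a computer verification for the rest). Your whirl analysis is in fact slightly more explicit than the paper's, but the overall argument is the same.
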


\begin{proof}
By the (\ref{th:oxleytern}), $M$ is isomorphic to a whirl $W^r$, to $J$ or is isomorphic to a
$3$-connected minor of the Steiner matroid $S(5,6,12)$. Every whirl $W^r$ for $r \geq 3$ has at least $2$ éléments that does not belongs
to at least two triangles and the matroid $J$ has a $P_7$ minor (\cite[(2,9)]{Oxley1}). Moreover we checked by computer that all the $3$-connected minors
of $S(5,6,12)$ has at least $2$ elements that does not belongs to at least two triangles or contain a $P_7$ minor. So either $M$ contain $P_7$ as a minor
or $M$ is isomorphic to the whirl $W^2$, that is, $M$ is isomorphic to $U_{2,4}$, and the result follows.
\end{proof}

We may now prove Theorem~\ref{th:ternary}.

\begin{proof}[Proof of Theorem~\ref{th:ternary}]
Let $M$ be a simple ternary matroid with no $M(K_4)$ minor such that every elements belongs to at least $2$ triangles.
If $M$ is $3$-connected then, by Lemma~\ref{lem:3connectter}, the result follows.

Suppose now that $M$ is not $3$-connected. By (\ref{sey1}) and (\ref{sey5}), $M$ can be written as $M_1 \oplus_k M_2$ where $k \leq 2$ and where
$M_1$ and $M_2$ are two strict minors of $M$. Without loss of generality, we can suppose that $M_1$ is $3$-connected (by taking $M_1$ and $M_2$
such that $|E(M_1)|$ is minimal). Moreover, by Lemma~\ref{lem:elemsum}, every element of $M_1$ belongs to at least $2$ triangles except maybe
for the only element of $E(M_1) \cap E(M_2)$. So by the (\ref{th:oxleytern}), $M_1$ contains $P_7$ as a minor or is isomorphic to $U_{2,4}$.
In the first case, since $M_1$ is a minor of $M$, then $M$ contain $P_7$ as a minor and we are done. In the second case, suppose by contradiction that $M$
does not contain $U_{2,4}$ as a submatroid. If $M$ is the direct sum of $M_1$ and $M_2$, then $M_1$ is a submatroid of $M$ and thus $M$ contain
$U_{2,4}$ as a submatroid, contradicting the hypothesis. We thus deduce that $M$ is the $2$-sum of $M_1$ and $M_2$. Let $p$ be the only element of $E(M_1) \cap E(M_2)$.
We claim that every element of $E(M_1) \setminus \{p\}$ belongs to at most one triangle in $M$. Suppose that one element of $M_1 \setminus \{p\}$
belongs to two triangles. As $|E(M_1) \setminus \{p\}| = 3$, one of the two triangles denoted by $T$, can be written, by the definition of $2$-sum, as $C_1 \Delta C_2$
where $C_i$ is a circuit of $M_i$ for $1 \leq i \leq 2$. Since $|T| = |C_1| + |C_2| - 2|C_1 \cap C_2| = 3$ and $|C_1 \cap C_2| \leq 1$, we deduce
that either $|C_1| \leq 3$ and $|C_2| = 2$, either $|C_1| = 2$ and $|C_2| \leq 3$. The latter cannot happen because otherwise $C_1$ would be a circuit
of $M_1$ of size $2$ which is not possible since $M_1$ is isomorphic to $U_{2,4}$. In the former case, since $|C_2| = 2$ and $p \in C_2$
(by definition of the $2$-sum), we may denote $C_2 = \{p,q\}$. Since $p \in M_2$ and $q$ is parallel to $p$, $M_{|E(M_1) \setminus \{q\}}$ is isomorphic
to $U_{2,4}$ and thus $M$ contain $U_{2,4}$ as a submatroid, which is again a contradiction. Thus every element of $E(M_1) \setminus \{p\}$
belong to at most one triangle in $M$. Therefore all elements of $M|_{E(M_1) \setminus \{p\}}$ belong to at most one triangle, contradicting the hypothesis, and
the result follows.
\end{proof}

\end{document}